\documentclass[12pt,a4paper]{amsart}
\usepackage{latexsym,amsfonts,amsthm,amsmath,mathrsfs,amssymb}
\usepackage[british]{babel}
\usepackage[dvips]{color}
\usepackage[utf8]{inputenc}
\usepackage[T1]{fontenc}
\usepackage[dvips,final]{graphics}
\usepackage{xcolor}
\usepackage{mathrsfs}
\usepackage{mathtools} 
\usepackage{breqn}
\pagestyle{plain}
\usepackage{epstopdf}
\usepackage{verbatim}
\usepackage{amscd}
\usepackage{hyperref}
\usepackage[english,capitalize]{cleveref}
\usepackage{aliascnt}

\usepackage{todonotes}


\textwidth = 6.5 in
\textheight = 9 in
\oddsidemargin = 0.0 in
\evensidemargin = 0.0 in
\topmargin = 0.0 in
\headheight = 0.0 in
\headsep = 0.0 in

\makeatletter
\def\newaliasedtheorem#1[#2]#3{
	\newaliascnt{#1@alt}{#2}
	\newtheorem{#1}[#1@alt]{#3}
	\expandafter\newcommand\csname #1@altname\endcsname{#3}
}
\makeatother

\theoremstyle{plain}

\newtheorem{theorem}{Theorem}[section]
\newaliasedtheorem{prop}[theorem]{Proposition}
\newaliasedtheorem{lem}[theorem]{Lemma}
\newaliasedtheorem{coroll}[theorem]{Corollary}

\theoremstyle{definition}
\newaliasedtheorem{defi}[theorem]{Definition}
\newaliasedtheorem{quest}[theorem]{Question}
\newaliasedtheorem{fact}[theorem]{Fact}

\theoremstyle{remark}
\newaliasedtheorem{rem}[theorem]{Remark}
\newaliasedtheorem{exa}[theorem]{Example}

\newcommand{\R}{\mathbb{R}}

\newcommand{\C}{\mathbb{C}}

\let\altphi\phi
\let\phi\varphi
\let\varphi\altphi
\let\altphi\undefined

\newcommand{\average}{{\mathchoice {\kern1ex\vcenter{\hrule height.4pt
width 6pt
depth0pt} \kern-9.7pt} {\kern1ex\vcenter{\hrule height.4pt width 4.3pt
depth0pt}
\kern-7pt} {} {} }}

\address{\textsc{Daniela Di Donato}: 
Dipartimento di Ingegneria Industriale e Scienze Matematiche, Via Brecce Bianche, 12 60131 Ancona, Universit\'a Politecnica delle Marche.}
\email{daniela.didonato@unitn.it}

\title{Intrinsically quasi-isometric sections in metric spaces}

\date{\today}

\author{ Daniela Di Donato }

\subjclass[]{ 
26A16  
51F30 
46B04 
54E35 
}
\keywords{Large scale geometry, Quasi-isometric graphs, vector space, Ahlfors-David regularity, Metric spaces}

\begin{document}

\begin{abstract} This note is a contribution to large scale geometry. More precisely, we introduce the intrinsically quasi-isometric sections in metric spaces and we  investigate their properties: the Ahlfors-David regularity in large scale; following Cheeger theory, it is possible to define suitable sets in order to obtain convexity and being a vector space over $\R$ or $\C$ for these sections; yet, following Cheeger's idea, we give an equivalence relation for this class of sections. Throughout the paper, we use basic mathematical tools.	\end{abstract}

\maketitle 
\tableofcontents

\section{Introduction}
This paper is a contribution to Large Scale Geometry,  also known as coarse geometry. Large Scale Geometry is the study of geometric objects viewed from a great distance. It plays an important role in geometric group theory, algebraic K-theory, non-commutative geometry, and related areas of analysis. The reader can see \cite{NG12, BW97, Roe2003LecturesOC}.

In particular, we focus our attention on the concept of quasi-isometric graphs in metric spaces.  In general, quasi-isometric maps \cite{G87, LD17, S95} are large scale version of biLipschitz maps and the latter have been studied by Le Donne and the author  in \cite{DDLD21}. More precisely,  Le Donne and the author  \cite{DDLD21} give a 'different' notion of Lipschitz graph starting from two simple facts:
\begin{enumerate}
\item Franchi, Serapioni and Serra Cassano  \cite{FSSC, FSSC03, MR2032504} introduced and studied the class of intrinsically Lipschitz maps in subRiemannian Carnot groups in order to establish a good notion of rectifiability sets in the context of subRiemannian Carnot groups \cite{ABB, BLU, CDPT} after the negative result in \cite{AmbrosioKirchheimRect}. 
\item we consider graphs instead of maps.
\end{enumerate}

 In our context we consider a section $\phi$ of $\pi$ (i.e., $\pi \circ \phi = id$) such that $\pi:X \to Y$ produces a foliation for $X,$ i.e., $X= \coprod \pi ^{-1} (y)$ and the Lipschitz property of $\phi$ consists to ask that the distance between two points is comparable with the distance between a point and a fiber.  Following this idea, it is natural to study other notions like intrinsically H\"older \cite{D22.1} and  quasi-symmetric \cite{D22.2} sections where in the right side term we consider the distance with a fiber instead of a point. In this paper, we introduce the intrinsically quasi-isometric sections as follows: we have a metric space $X$, a topological space $Y$, and a 
quotient map $\pi:X\to Y$, meaning
continuous, open, and surjective.
The standard example for us is when $X$ is a metric Lie group $G$ (meaning that the Lie group $G$ is equipped with a left-invariant distance that induces the manifold topology), for example a subRiemannian Carnot group, 
and $Y$ if the space of left cosets $G/H$, where 
$H<G$ is a  closed subgroup and $\pi:G\to G/H$ is the projection modulo $H$, $g\mapsto gH$.

\begin{defi}\label{def_ILS}We say that a map $\phi:Y\to X$ is an {\em intrinsically $(L,M)$-roughly quasi-isometric section of $\pi$} or, simply, intrinsically $(L,M)$-quasi-isometric section of $\pi$,  with $L\geq 1$ and $M\geq 0$, if
\begin{equation}
\pi \circ \phi =\mbox{id}_Y,
\end{equation}
and
\begin{equation}
d(\phi (y_1), \phi (y_2)) \leq L d(\phi (y_1), \pi ^{-1} (y_2)) +M, \quad \mbox{for all } y_1, y_2 \in Y.
\end{equation}
Here $d$ denotes the distance on $X$, and, as usual, for a subset $A\subset X$ and a point $x\in X$, we have
$d(x,A):=\inf\{d(x,a):a\in A\}$.
\end{defi}

If $L=1,$ then $\phi$ is called an $M$-roughly isometric; on the other hand if $M=0$ the intrinsically $L$-roughly quasi-isometric sections are  intrinsically  $L$-Lipschitz  sections studied in \cite{DDLD21}. Moreover, we underline that, when $M=0$ and $
 \pi$ is a Lipschitz quotient or submetry \cite{MR1736929, Berestovski}, the results trivialize, since in this case being intrinsically Lipschitz  is equivalent to biLipschitz embedding, see Proposition 2.4 in \cite{DDLD21}.

It easy to see that an intrinsically $(L,M)$-quasi-isometric section need not be continuous either. Moreover, by the simply fact that $\phi (y) \in \pi^{-1} (y),$ we have that
\begin{equation*}
\frac 1 L d(\phi (y_1), \pi ^{-1} (y_2)) -M\leq  d(\phi (y_1), \pi ^{-1} (y_2)) \leq d(\phi (y_1), \phi (y_2)), 
\end{equation*}
 for any $y_1,y_2 \in Y$ and so we get the left part of  classical  quasi-isometric definition  is trivial, i.e.,
 \begin{equation*}
 \frac 1 L d(\phi (y_1), \pi ^{-1} (y_2)) -M \leq d(\phi (y_1), \phi (y_2)) \leq L d(\phi (y_1), \pi ^{-1} (y_2)) +M, 
\end{equation*} 
  for any $y_1,y_2 \in Y.$

 \medskip

The main result of this paper is the following.
   \begin{theorem}[Ahlfors-David regularity]\label{thm2}
   Let $\pi :X \to Y$ be a quotient map between a metric space $X$ and a topological space $Y$ such that there is a measure $\mu$ on $Y$ such that for every $r_0>0$ and every $x,x' \in X$ with $\pi (x)=\pi(x')$  there is $C>0$ such that
      \begin{equation}\label{Ahlfors27ott.112}
\mu (\pi (B(x,r))) \leq C \mu (\pi (B(x',r))),
\end{equation}
for  every $r>r_0.$
   
We also assume that $\phi :Y \to X$ is an intrinsically $(L,M)$-quasi-isometric section of $\pi$ such that $\phi (Y)$ is, for large scale, $Q$-Ahlfors-David regular with respect to  the measure $\phi_* \mu$, with $Q\in (0,\infty)$. 

  Then, for every intrinsically  $(L,M)$-quasi-isometric section $\psi :Y \to X,$  the set $ \psi (Y) $ is such that  
   \begin{equation}\label{AhlforsNEW127}
 (r-M)^Q \lesssim \psi _* \mu \big( B(\psi(y),r) \cap \psi (Y)\big)  \lesssim (r+M)^Q,
\end{equation}
  for every $r > r_0.$


    \end{theorem}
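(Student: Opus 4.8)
The plan is to rephrase everything in terms of the base space $Y$. For a section $\sigma$ of $\pi$ write
\[
N_\sigma(y,\rho):=\{y'\in Y:\ d(\sigma(y'),\sigma(y))<\rho\}.
\]
Since $\sigma^{-1}(\sigma(Y))=Y$, one has $\sigma^{-1}\big(B(\sigma(y),\rho)\cap\sigma(Y)\big)=N_\sigma(y,\rho)$, hence $\sigma_*\mu\big(B(\sigma(y),\rho)\cap\sigma(Y)\big)=\mu\big(N_\sigma(y,\rho)\big)$. Thus the assumption that $\phi(Y)$ is, for large scale, $Q$-Ahlfors-David regular with respect to $\phi_*\mu$ reads $\mu(N_\phi(y,\rho))\asymp\rho^Q$ for $\rho$ above some threshold, and the conclusion to be proved is $(r-M)^Q\lesssim\mu(N_\psi(y,r))\lesssim(r+M)^Q$ for $r$ large.

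The key step is the two-sided inclusion
\[
N_\sigma(y,\rho)\ \subseteq\ \pi\big(B(\sigma(y),\rho)\big)\ \subseteq\ N_\sigma(y,L\rho+M),
\]
valid for every intrinsically $(L,M)$-quasi-isometric section $\sigma$ of $\pi$, every $y\in Y$ and every $\rho>0$. The left inclusion is immediate: if $d(\sigma(y'),\sigma(y))<\rho$ then $\sigma(y')\in B(\sigma(y),\rho)$, so $y'=\pi(\sigma(y'))\in\pi(B(\sigma(y),\rho))$. For the right one, if $y'\in\pi(B(\sigma(y),\rho))$ then there is $x\in\pi^{-1}(y')$ with $d(\sigma(y),x)<\rho$, whence $d(\sigma(y),\pi^{-1}(y'))<\rho$, and the defining inequality of Definition~\ref{def_ILS} applied to $\sigma$ gives $d(\sigma(y),\sigma(y'))\le L\,d(\sigma(y),\pi^{-1}(y'))+M<L\rho+M$. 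Note that $\pi(B(\sigma(y),\rho))$ is open because $\pi$ is an open map, so its $\mu$-measure is meaningful; this is essentially the only place where the quotient-map hypotheses on $\pi$ are used.

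Next I would combine these inclusions for the two sections. Applying them to $\phi$ and using the regularity of $\phi(Y)$,
\[
r^Q\ \lesssim\ \mu(N_\phi(y,r))\ \le\ \mu\big(\pi(B(\phi(y),r))\big)\ \le\ \mu\big(N_\phi(y,Lr+M)\big)\ \lesssim\ (Lr+M)^Q
\]
for $r$ large. The bridge to $\psi$ is that $\phi(y)$ and $\psi(y)$ lie in the \emph{same} fibre $\pi^{-1}(y)$: hypothesis \eqref{Ahlfors27ott.112}, used in both orders, gives $\mu(\pi(B(\psi(y),r)))\asymp\mu(\pi(B(\phi(y),r)))$ for $r>r_0$, so that $r^Q\lesssim\mu(\pi(B(\psi(y),r)))\lesssim(Lr+M)^Q$ for $r$ large. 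Finally, apply the two-sided inclusion to $\psi$: its left half yields $\mu(N_\psi(y,r))\le\mu(\pi(B(\psi(y),r)))\lesssim(Lr+M)^Q\le L^Q(r+M)^Q$, the desired upper bound; its right half yields $\mu(N_\psi(y,Lr+M))\ge\mu(\pi(B(\psi(y),r)))\gtrsim r^Q$, and the substitution $s=Lr+M$, i.e.\ $r=(s-M)/L$, turns this into $\mu(N_\psi(y,s))\gtrsim L^{-Q}(s-M)^Q$, the desired lower bound. Recalling $\mu(N_\psi(y,s))=\psi_*\mu\big(B(\psi(y),s)\cap\psi(Y)\big)$ completes the proof.

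The only delicate point is the bookkeeping of scales and constants: the regularity of $\phi(Y)$ and the inequality \eqref{Ahlfors27ott.112} each carry a threshold, and the substitution $s=Lr+M$ shifts the admissible range of scales; but since \eqref{Ahlfors27ott.112} is assumed for every $r_0>0$ one may freely enlarge the threshold, and every constant produced along the way ---including the dependence on $y$ already present in \eqref{Ahlfors27ott.112}--- is absorbed by the symbols $\lesssim$ and $\gtrsim$. So this is routine, and all the content of the theorem sits in the sandwich $N_\sigma(y,\rho)\subseteq\pi(B(\sigma(y),\rho))\subseteq N_\sigma(y,L\rho+M)$ together with the observation that two intrinsically quasi-isometric sections take values in a common fibre over each point of $Y$.
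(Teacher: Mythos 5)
Your proposal is correct and follows essentially the same route as the paper: your sandwich $N_\sigma(y,\rho)\subseteq\pi(B(\sigma(y),\rho))\subseteq N_\sigma(y,L\rho+M)$ is exactly the paper's Lemma~\ref{pseudodistance} (note $N_\sigma(y,\rho)=\pi(B(\sigma(y),\rho)\cap\sigma(Y))$, so the two inclusions coincide after the relabelling $\rho=r/L$), and the chaining through hypothesis \eqref{Ahlfors27ott.112} to pass from the fibre point $\phi(y)$ to $\psi(y)$ is the same. If anything, your bookkeeping of the shifted thresholds under the substitution $s=Lr+M$ is slightly more careful than the paper's.
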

 Namely, in Theorem~\ref{thm2}  Ahlfors-David $Q$-regularity means that    the measure $\phi_* \mu$ is such that  for each point $x\in \phi (Y)$ there exist $r_0>0$ and $C>0$ so that
  \begin{equation}\label{Ahlfors_IN_N}
 C^{-1}r^Q\leq  \phi_* \mu \big( B(x,r) \cap \phi (Y)\big) \leq C r^Q, \qquad \text{ for all }r>r_0.
\end{equation}

Finally, thanks to the seminal paper of Cheeger \cite{C99} (see also \cite{K04, KM16}) we give an  equivalent property of quasi-isometric sections which we will be the basic point in order to obtain the following results:
\begin{enumerate}
\item Proposition \ref{propLeibnitz formula for slope}: a suitable set of this class of sections is convex one.
\item Theorem \ref{theorem24} and \ref{theorem30}: a suitable set of this class of sections is a vector space over $\R$ or $\C.$
\item Theorem \ref{theorem26aprile} gives an equivalence relation of this class of sections.
\end{enumerate}
Regarding being vector spaces over  $\R$ or $\C,$ we present two different approaches: in Theorem \ref{theorem24} we used to being quasi-isometric section with respect to another section; on the other hand, in Theorem \ref{theorem30} we used the finite distance between two points belong to the same fiber.

 \begin{prop}\label{linkintrinsicocneelip}
   Let $X  $ be a metric space, $Y$ a topological space, $\pi :X \to Y$   a  quotient map, $L\geq 1$ and $M\geq 0$.
Assume that   every point $x\in X$ is contained in the image of an intrinsic $(L,M)$-quasi-isometric section $\psi_x$ for $\pi$.
 Then for every section $\phi :Y\to X$ of $\pi$ the following are equivalent:
   \begin{enumerate}
\item for all $x\in\phi(Y)$ the section $\phi $ is intrinsically $(L_1,M_1 )$-quasi-isometric with respect to  $\psi_x$ at   $x;$
\item  the section $\phi $  is intrinsically $(L_2, M_2)$-quasi-isometric.
\end{enumerate}
Moreover, if $\psi$ is intrinsically Lipschitz $($i.e. $M=0)$, then $M_1=M_2.$
   \end{prop}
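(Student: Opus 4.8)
The plan is to prove the two implications separately, with the direction $(2)\Rightarrow(1)$ being essentially immediate and the direction $(1)\Rightarrow(2)$ carrying the real content. First I would fix notation for the auxiliary notion: to say that $\phi$ is \emph{intrinsically $(L_1,M_1)$-quasi-isometric with respect to $\psi_x$ at $x$} should mean that for every $y\in Y$ one has $d(x,\phi(y))\le L_1\, d(x,\psi_x(\pi^{-1}(y))) + M_1$, where $x=\phi(z)$ for the appropriate $z$; equivalently $d(\phi(z),\phi(y))\le L_1\, d(\phi(z),\psi_{\phi(z)}(\pi^{-1}(y)))+M_1$. For $(2)\Rightarrow(1)$: assuming $\phi$ is intrinsically $(L_2,M_2)$-quasi-isometric, I would bound $d(\phi(z),\phi(y))\le L_2\, d(\phi(z),\pi^{-1}(y))+M_2$ and then observe that $\pi^{-1}(y)=\psi_x(\{y\})$ lies in $\psi_x(\pi^{-1}(y))$ (since $\psi_x$ is a section, $\psi_x(y)\in\pi^{-1}(y)$), so $d(\phi(z),\pi^{-1}(y))\le d(\phi(z),\psi_x(\pi^{-1}(y)))$ is the wrong direction — instead I use that $\psi_x(\pi^{-1}(y))\subseteq\pi^{-1}(y)$ as a set only when $\pi^{-1}(y)$ is a single fiber, which it is, so in fact $d(\phi(z),\pi^{-1}(y))\le d(\phi(z),\psi_x(w))$ for every $w\in\pi^{-1}(y)$, giving $d(\phi(z),\pi^{-1}(y))\le d(\phi(z),\psi_x(\pi^{-1}(y)))$ after taking the infimum; hence $(L_1,M_1)=(L_2,M_2)$ works.

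For $(1)\Rightarrow(2)$, the strategy is the standard "triangle inequality through the auxiliary section" argument. Take $y_1,y_2\in Y$ and set $x:=\phi(y_1)$. By hypothesis $(1)$ applied at $x$,
\begin{equation*}
d(\phi(y_1),\phi(y_2))\le L_1\, d\big(\phi(y_1),\psi_x(\pi^{-1}(y_2))\big)+M_1,
\end{equation*}
so it suffices to control $d(\phi(y_1),\psi_x(\pi^{-1}(y_2)))$ by $d(\phi(y_1),\pi^{-1}(y_2))$ up to the allowed multiplicative and additive constants. Here I would use that $\psi_x$ is itself intrinsically $(L,M)$-quasi-isometric and that $x=\phi(y_1)=\psi_x(y_1)$ lies in the image of $\psi_x$ (this is exactly the role of the hypothesis "every point of $X$ lies in the image of such a section" — applied to $x=\phi(y_1)$, we may and do take $\psi_x$ to be a section whose image contains $x$, i.e. $\psi_x(y_1)=x$). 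Then for any $w\in\pi^{-1}(y_2)$, choosing $y'\in Y$ with $\psi_x(y')$ close to realizing $d(\phi(y_1),\pi^{-1}(y_2))$, the $(L,M)$-quasi-isometry of $\psi_x$ gives $d(\psi_x(y_1),\psi_x(y_2))\le L\, d(\psi_x(y_1),\pi^{-1}(y_2))+M = L\, d(\phi(y_1),\pi^{-1}(y_2))+M$, and since $\psi_x(y_2)\in\psi_x(\pi^{-1}(y_2))$ this bounds $d(\phi(y_1),\psi_x(\pi^{-1}(y_2)))\le L\, d(\phi(y_1),\pi^{-1}(y_2))+M$. Combining,
\begin{equation*}
d(\phi(y_1),\phi(y_2))\le L_1 L\, d(\phi(y_1),\pi^{-1}(y_2)) + (L_1 M + M_1),
\end{equation*}
so $(2)$ holds with $L_2:=L_1 L$ and $M_2:=L_1 M+M_1$. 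For the final sentence, if $\psi$ (i.e. each $\psi_x$) is intrinsically Lipschitz, then $M=0$, and the computation just performed gives $M_2=L_1\cdot 0+M_1=M_1$; conversely the $(2)\Rightarrow(1)$ direction gave $M_1\le M_2$ with the same additive constant, so $M_1=M_2$.

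The step I expect to be the main obstacle — or at least the one requiring care — is making precise the localized notion "quasi-isometric with respect to $\psi_x$ at $x$" and verifying that the hypothesis genuinely lets us choose, for each $x\in\phi(Y)$, an auxiliary section $\psi_x$ with $x$ in its image and simultaneously $\psi_x(y_1)=x$ when $x=\phi(y_1)$; if the definition of "at $x$" only quantifies over the single base point $y_1=\pi(x)$ this is automatic, but one must check the infimum $d(\phi(y_1),\psi_x(\pi^{-1}(y_2)))$ is taken over the full fiber image and not just over $\psi_x(y_2)$. A secondary point is bookkeeping the constants so that $L_1,L_2\ge 1$ and $M_1,M_2\ge 0$ are respected (which they are, since $L_1 L\ge 1$ and $L_1 M+M_1\ge 0$), and noting that no continuity or measurability of $\phi$ or $\psi_x$ is used anywhere, consistent with the remark in the excerpt that such sections need not be continuous.
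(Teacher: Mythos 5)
Your overall strategy coincides with the paper's: the paper proves the pointwise version (its Proposition~\ref{prop6apr}, from which Proposition~\ref{linkintrinsicocneelip} follows by letting the base point range over $\phi(Y)$), and both directions there are exactly your ``triangle inequality through the auxiliary section'' argument, using $\psi_x(y_2)\in\pi^{-1}(y_2)$ for one implication and the $(L,M)$-quasi-isometry of $\psi_x$ for the other. So the mathematical content of your proposal is the right one, and your constants $L_2=L_1L$, $M_2=L_1M+M_1$ (hence $M_1=M_2$ when $M=0$) match the paper's bookkeeping up to the discrepancy noted below.

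There are, however, two points you must repair. First, you guessed the definition of ``intrinsically $(L_1,M_1)$-quasi-isometric with respect to $\psi_x$ at $x$'' rather than using the paper's Definition~\ref{defwrtpsinew}: the paper's condition bounds $d(\phi(y),\psi_x(y))$ (the gap between the two sections over $y$) by $L_1\,d(\hat x,\psi_x(y))+M_1$, whereas yours bounds $d(x,\phi(y))$ by the same right-hand side. The two formulations are equivalent, but only after one extra triangle inequality through $\psi_x(y)$, which shifts the multiplicative constant by $1$ (the paper gets $L_2=(L_1+1)L$); as written, your argument proves an equivalence for a slightly different relative notion, and you should add the one-line reduction between the two. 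Second, the expression $\psi_x(\pi^{-1}(y))$ is ill-formed -- $\psi_x$ is defined on $Y$, while $\pi^{-1}(y)\subset X$ -- and the surrounding discussion in your $(2)\Rightarrow(1)$ paragraph (e.g.\ ``$\pi^{-1}(y)=\psi_x(\{y\})$'', ``$\psi_x(\pi^{-1}(y))\subseteq\pi^{-1}(y)$ as a set only when $\pi^{-1}(y)$ is a single fiber'') is incoherent as stated. What you actually need, and all you need, is the single point $\psi_x(y)\in\pi^{-1}(y)$, which gives $d(\phi(z),\pi^{-1}(y))\le d(\phi(z),\psi_x(y))$; once the notation is fixed to refer to that point throughout, both implications go through as you intend.
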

 
 {\bf Acknowledgements.}  We would like to thank Giorgio Stefani for the reference \cite{KM16}.

\section{Equivalent definitions for intrinsically quasi-isometric  sections}
\label{sec:equiv_def}

\begin{defi}[Intrinsic quasi-isometric section]\label{Intrinsic Lipschitz section}
Let $(X,d)$ be a metric space and let $Y$ be a topological space. We say that a map $\phi :Y \to X$ is a {\em section} of a quotient map $\pi :X \to Y$ if
\begin{equation*}
\pi \circ \phi =\mbox{id}_Y.
\end{equation*}
Moreover, we say that $\phi$ is an {\em intrinsically $(L,M)$quasi-isometric section} with constants $L\geq 1$ and $M\geq 0$ if in addition
\begin{equation*}
d(\phi (y_1), \phi (y_2)) \leq L d(\phi (y_1), \pi ^{-1} (y_2)) +M, \quad \mbox{for all } y_1, y_2 \in Y.
\end{equation*}

Equivalently, we are requesting that  that
\begin{equation*}
d(x_1, x_2) \leq L d(x_1, \pi ^{-1} (\pi (x_2)))+M, \quad \mbox{for all } x_1,x_2 \in \phi (Y) .
\end{equation*}
\end{defi}

We further rephrase the definition as saying that $\phi(Y)$, which we call the {\em graph} of $\phi$, avoids some particular sets (which depend on $L, M$ and $\phi$ itself):


\begin{prop}\label{propo_ovvia} Let $\pi :X \to Y$  be a  quotient map between a metric space and a topological space, $\phi: Y\to X$ be a section of $\pi$, $L\geq 1$ and $M\geq 0$.
Then $\phi$ is intrinsically $(L,M)$-quasi-isometric if and only if
\begin{equation*}
 \phi  (Y) \cap  R_{x,L} = \emptyset , \quad \mbox{for all } x \in \phi (Y),
\end{equation*}
where $$R_{x,L} := \left\{ x'\in X \;|\;   L d(x', \pi ^{-1} (\pi (x))) +M<   d(x', x)\right\}.$$


%
\end{prop}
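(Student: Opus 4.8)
The plan is to unwind both sides of the claimed equivalence directly from \Cref{Intrinsic Lipschitz section}, since this is essentially a reformulation rather than a substantive theorem. Recall that $\phi$ is intrinsically $(L,M)$-quasi-isometric exactly when
\begin{equation*}
d(x_1, x_2) \leq L\, d(x_1, \pi^{-1}(\pi(x_2))) + M \quad \text{for all } x_1, x_2 \in \phi(Y).
\end{equation*}
So the first step is to observe that the defining inequality \emph{fails} for some ordered pair $(x_1,x_2)$ with $x_1,x_2\in\phi(Y)$ precisely when $L\,d(x_1,\pi^{-1}(\pi(x_2))) + M < d(x_1,x_2)$, i.e.\ precisely when, writing $x:=x_2$ and $x':=x_1$, we have $x' \in R_{x,L}$. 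Hence $\phi$ is intrinsically $(L,M)$-quasi-isometric if and only if there is no pair $x\in\phi(Y)$, $x'\in\phi(Y)$ with $x'\in R_{x,L}$, which is exactly the statement $\phi(Y)\cap R_{x,L}=\emptyset$ for all $x\in\phi(Y)$.

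Concretely I would argue the two directions separately. For the forward direction, assume $\phi$ is intrinsically $(L,M)$-quasi-isometric, fix $x\in\phi(Y)$, and take any $x'\in\phi(Y)$; applying the defining inequality with $x_1=x'$, $x_2=x$ gives $d(x',x)\leq L\,d(x',\pi^{-1}(\pi(x)))+M$, so by definition of $R_{x,L}$ the point $x'$ does not lie in $R_{x,L}$; since $x'$ was arbitrary in $\phi(Y)$, we conclude $\phi(Y)\cap R_{x,L}=\emptyset$. For the converse, assume $\phi(Y)\cap R_{x,L}=\emptyset$ for every $x\in\phi(Y)$, and take arbitrary $x_1,x_2\in\phi(Y)$; since $x_1\in\phi(Y)$ and $x_1\notin R_{x_2,L}$, the negation of the defining condition of $R_{x_2,L}$ reads $d(x_1,x_2)\leq L\,d(x_1,\pi^{-1}(\pi(x_2)))+M$, which is exactly the required inequality. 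Then one invokes the equivalence between the "graph" formulation (inequality for $x_1,x_2\in\phi(Y)$) and the "section" formulation (inequality for $y_1,y_2\in Y$) already recorded in \Cref{Intrinsic Lipschitz section} to finish.

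There is no real obstacle here; the only point requiring a modicum of care is the logical passage from "the inequality holds for all pairs" to "the complementary set $R_{x,L}$ is avoided", making sure the roles of $x$ and $x'$ (equivalently $x_2$ and $x_1$) are matched correctly with the order of arguments in the definition of $R_{x,L}$, and noting that $\pi^{-1}(\pi(x))=\pi^{-1}(y_2)$ when $x=\phi(y_2)$ so that the two phrasings of the quasi-isometry condition genuinely coincide. I would keep the write-up to a few lines, presenting it as an immediate consequence of \Cref{Intrinsic Lipschitz section}.
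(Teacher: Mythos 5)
Your proof is correct and is exactly the intended argument: the paper itself gives no proof, remarking only that the proposition ``is a triviality,'' and your careful unwinding of the definition of $R_{x,L}$ against the pointwise formulation in \Cref{Intrinsic Lipschitz section} (with the roles of $x_1$ and $x_2$ matched correctly) is the same trivial equivalence the author has in mind.
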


Proposition \ref{propo_ovvia} is a triviality, still its purpose is to stress the analogy with the intrinsically Lipschitz sections theory introduced in \cite{DDLD21} when $M=0$ and, in particular, the sets $R_{x,L}$ are the intrinsic cones  in the sense of  Franchi, Serapioni and Serra Cassano  considered in subRiemannian Carnot groups when $M=0$. The reader can see \cite{D22.3} for a suitable notion of intrinsic cones in metric groups.

\subsection{Intrinsic  quasi-isometric  with respect to  families of sections}
In this section we continue to fix a  quotient map $\pi :X \to Y$ between a metric space $X$ and a topological space $Y$.
%

\begin{defi}[Intrinsic quasi-isometric  with respect to  a section]\label{defwrtpsinew}
 Given  sections 
  $\phi, \psi :Y\to X$   
  of $\pi$. We say that   $\phi $ is {\em intrinsically $(L,M)$-quasi-isometric with respect to  $\psi$ at point $\hat x$}, with $L\geq1 , M\geq 0$ and $\hat x\in X$, if
\begin{enumerate}
\item $\hat x\in \psi(Y)\cap \phi (Y);$
\item $\phi  (Y) \cap  C_{\hat x,L}^{\psi} = \emptyset ,$
\end{enumerate}
where
$$ C_{\hat x,L}^{\psi} := \{x\in X \,:\,  d(x, \psi (\pi (x))) > L d(\hat x, \psi (\pi (x))) +M \}.   $$
\end{defi}

  \begin{rem} Definition~\ref{defwrtpsinew} can be rephrased as follows.
 A section $\phi  $ is intrinsically $(L,M)$-quasi-isometric with respect to  $\psi$ at point $\hat x$ if
 and only if 
 there is $\hat y\in Y$ such that  $\hat x= \phi (\hat y)=\psi(\hat y)$ and
\begin{equation*}\label{defintrlipnuova}
 d(x, \psi (\pi (x))) \leq L d(\hat x, \psi (\pi (\hat x)))+M, \quad \forall x \in \phi (Y), 
\end{equation*}
which equivalently means 
\begin{equation}\label{equation28.0}
 d(\phi (y), \psi (y)) \leq L d(\psi(\hat y), \psi (y))+M ,\qquad \forall y\in Y. 
\end{equation}
  \end{rem}

The proof of Proposition $\ref{linkintrinsicocneelip}$ is an immediately consequence of the following result.

 \begin{prop}\label{prop6apr}
   Let $X  $ be a metric space, $Y$ a topological space, and $\pi :X \to Y$   a  quotient map. Let $L\geq1, M \geq 0$ and $y_0\in Y$. Assume $\phi_0:Y\to X$ is an intrinsically $(L,M)$-quasi-isometric section of $\pi$. Let $\phi :Y\to X$ be a section of $\pi$
   such that 
   $x_0:=\phi (y_0)=\phi _0(y_0).$
   Then the following are equivalent:
   \begin{enumerate}
\item  For some $L_1\geq1$ and $M_1\geq 0$, $\phi $ is intrinsically $(L_1,M_1)$-quasi-isometric with respect to  $\phi_0$ at   $x_0;$
\item  For some $L_2\geq1$  and $M_2\geq 0$, $\phi $ satisfies   
\begin{equation}\label{equation3nov2021}
d(x_0,\phi (y)) \leq L_2d(x_0, \pi^{-1} (y))+M_2, \quad \forall y\in Y.
\end{equation}

\end{enumerate}
   Moreover, the constants $L_1$ and $L_2 $ are quantitatively related in terms of $L$ and  $M_1$ and $M_2 $ are quantitatively related in terms of $M$.
   \end{prop}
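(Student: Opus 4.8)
The plan is to prove the two implications of Proposition~\ref{prop6apr} separately, translating each hypothesis through the triangle inequality together with the defining inequality of $\phi_0$ as an intrinsically $(L,M)$-quasi-isometric section. Throughout I will use the reformulation recorded in the remark: $\phi$ is intrinsically $(L_1,M_1)$-quasi-isometric with respect to $\phi_0$ at $x_0$ if and only if $x_0=\phi(y_0)=\phi_0(y_0)$ and $d(\phi(y),\phi_0(y))\le L_1 d(\phi_0(y_0),\phi_0(y))+M_1$ for all $y\in Y$. The basic dictionary is that, since $\phi_0(y)\in\pi^{-1}(y)$, one always has $d(x_0,\pi^{-1}(y))\le d(x_0,\phi_0(y))=d(\phi_0(y_0),\phi_0(y))$, while in the other direction the quasi-isometry inequality for $\phi_0$ gives $d(\phi_0(y_0),\phi_0(y))\le L\, d(\phi_0(y_0),\pi^{-1}(y))+M=L\,d(x_0,\pi^{-1}(y))+M$. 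So the quantity $d(\phi_0(y_0),\phi_0(y))$ and the quantity $d(x_0,\pi^{-1}(y))$ are comparable up to the multiplicative constant $L$ and the additive constant $M$; this is exactly the mechanism that will convert constants $L_1,M_1$ into $L_2,M_2$ and back.

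For the implication $(1)\Rightarrow(2)$, I would start from $d(\phi(y),\phi_0(y))\le L_1 d(\phi_0(y_0),\phi_0(y))+M_1$, then bound $d(x_0,\phi(y))\le d(x_0,\phi_0(y))+d(\phi_0(y),\phi(y)) = d(\phi_0(y_0),\phi_0(y))+d(\phi_0(y),\phi(y))\le (1+L_1)d(\phi_0(y_0),\phi_0(y))+M_1$, and finally apply the $\phi_0$-inequality $d(\phi_0(y_0),\phi_0(y))\le L\, d(x_0,\pi^{-1}(y))+M$ to obtain $d(x_0,\phi(y))\le (1+L_1)L\, d(x_0,\pi^{-1}(y)) + (1+L_1)M + M_1$, so one may take $L_2=(1+L_1)L$ and $M_2=(1+L_1)M+M_1$, which is the required form~\eqref{equation3nov2021}. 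Note $L_2\ge1$ automatically since $L,L_1\ge1$.

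For the implication $(2)\Rightarrow(1)$, I would run the estimate in the reverse direction: $d(\phi(y),\phi_0(y))\le d(\phi(y),x_0)+d(x_0,\phi_0(y)) = d(x_0,\phi(y)) + d(\phi_0(y_0),\phi_0(y))$, bound the first term by hypothesis~\eqref{equation3nov2021} by $L_2 d(x_0,\pi^{-1}(y))+M_2$, and then use the trivial bound $d(x_0,\pi^{-1}(y))\le d(\phi_0(y_0),\phi_0(y))$ to get $d(\phi(y),\phi_0(y))\le (L_2+1)d(\phi_0(y_0),\phi_0(y))+M_2$, so $L_1=L_2+1$ and $M_1=M_2$ work. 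One should check the anchoring conditions are preserved — in both directions $x_0=\phi(y_0)=\phi_0(y_0)$ is assumed, and $x_0=\phi_0(y_0)$ means $\pi(x_0)=y_0$, so $d(x_0,\pi^{-1}(y_0))=0$ forces consistency at $y=y_0$. Finally, for the last sentence of the statement, if $M=0$ then $\phi_0$ is intrinsically Lipschitz, and tracing the two computations above: $(1)\Rightarrow(2)$ gives $M_2=(1+L_1)\cdot0+M_1=M_1$, and $(2)\Rightarrow(1)$ gives $M_1=M_2$; so in the Lipschitz case the additive constants coincide, $M_1=M_2$, as claimed (and this then feeds into Proposition~\ref{linkintrinsicocneelip} with $\psi$ playing the role of $\phi_0$).

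The argument is essentially a bookkeeping exercise with the triangle inequality, so there is no deep obstacle; the one point deserving care is making sure the comparison between $d(\phi_0(y_0),\phi_0(y))$ and $d(x_0,\pi^{-1}(y))$ is used in the correct direction in each implication — using it the wrong way would require a lower bound of the form $d(x_0,\pi^{-1}(y))\gtrsim d(\phi_0(y_0),\phi_0(y))$ which is genuinely false in general — and tracking that all resulting $L_i$ remain $\ge1$ and all $M_i$ remain $\ge0$, which they do since every operation performed is an addition of nonnegative quantities or a multiplication of constants $\ge1$.
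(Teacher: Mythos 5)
Your proof is correct and follows essentially the same route as the paper's: both directions are the same two applications of the triangle inequality, converting between $d(\phi_0(y_0),\phi_0(y))$ and $d(x_0,\pi^{-1}(y))$ via the quasi-isometric inequality for $\phi_0$ in one direction and the trivial bound $d(x_0,\pi^{-1}(y))\le d(x_0,\phi_0(y))$ in the other. Your bookkeeping of the additive constant in $(1)\Rightarrow(2)$, namely $M_2=(1+L_1)M+M_1$, is in fact slightly more careful than the paper's displayed $M_1+M$, but this does not affect the conclusion.
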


    \begin{proof}  

$(1) \Rightarrow (2).$ For every $y\in Y,$ it follows that
     \begin{equation*}
\begin{aligned}
d(\phi (y), x_0) & \leq d(\phi (y), \phi _0(y))  + d(\phi _0(y),x_0)  \\
& \leq (L_1+1)d(\phi _0(y), x_0)+M_1  \\
& \leq L(L_1+1)d(x_0, \pi^{-1} (y)) +M_1+M ,  \\
\end{aligned}
\end{equation*}
   where in the first inequality we used the triangle inequality, and in the second one the intrinsic quasi-isometric property of $\phi.$ Then, in the third inequality we used the intrinsic quasi-isometric property of $\phi_0.$ 
      
      $(2) \Rightarrow (1).$   For every $y\in Y,$ we have that
\begin{equation*}
\begin{aligned}
d(\phi (y), \phi _0(y)) & \leq d(\phi (y),x_0) +  d( x_0, \phi _0 (y)) \\
& \leq (L_2+1)  d(  \phi _0 (y)  ,  x_0)+M_2 ,
\end{aligned}
\end{equation*}
  where in the first equality we used the triangle inequality, and in the second one we used \eqref{equation3nov2021} and the fact that $\phi_0 (y)$ belongs to the fiber $\pi ^{-1} (y)$.  
   \end{proof}

   \begin{rem} 
 Notice that if $\phi_0$ in Proposition \ref{prop6apr} is $L$-intrinsically Lipschitz (i.e., $M=0$), then  $M_1=M_2.$
\end{rem}

\subsection{Convex set}
In this section we show that a class of intrinsically quasi-isometric sections with respect to another one is a convex set. We begin considering the following set.   
   \begin{defi}[Intrinsic quasi-isometric set with respect to $\psi$]\label{defwrtpsinew.9apr.23} Let  $\psi: Y \to X$ a section of $\pi$.  We define the set  of all  intrinsically quasi-isometric  sections  of $\pi$ with respect to  $\psi$ at point $\hat x$ as
\begin{equation*}
\begin{aligned}
I_{\psi , \hat x} & :=\{ \phi :Y\to X \mbox{ a section of $\pi$}  \, :\, \phi \mbox{ is intrinsically $(L,M)$-quasi-isometric w.r.t. $\psi$} \\
& \quad \quad \mbox{at point $\hat x$  for some $ L\geq 1$ and $M \geq 0$} \}.
 \end{aligned}
\end{equation*}
  \end{defi}

    \begin{prop}\label{propLeibnitz formula for slope} 
Let $\pi :X \to Y$ be a linear and quotient map with $X$ a normed space and $Y$ a topological space. Assume also that $\psi: Y \to X$ a section of $\pi$ and $\hat x \in \psi (Y).$ Then, the set $I_{\psi , \hat x} $ is a convex set.
\end{prop}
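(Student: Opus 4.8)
The plan is to show that for any two sections $\phi_1,\phi_2 \in I_{\psi,\hat x}$ and any $t\in[0,1]$, the convex combination $\phi_t := t\phi_1 + (1-t)\phi_2$ is again a section of $\pi$ lying in $I_{\psi,\hat x}$. First I would check that $\phi_t$ is well-defined and is a section: since $X$ is a normed space, pointwise linear combinations of maps $Y\to X$ make sense, and since $\pi$ is linear, $\pi(\phi_t(y)) = t\,\pi(\phi_1(y)) + (1-t)\,\pi(\phi_2(y)) = t y + (1-t) y = y$ for all $y\in Y$, using $\pi\circ\phi_i = \mathrm{id}_Y$. (Here I am reading "$\pi$ linear" as saying $Y$ carries a linear structure compatible with $\pi$, so that $ty+(1-t)y = y$; this is the natural reading given the hypotheses.) I would also note that $\phi_t(\hat y) = t\psi(\hat y) + (1-t)\psi(\hat y) = \psi(\hat y) = \hat x$, where $\hat y$ is the point with $\phi_i(\hat y) = \psi(\hat y) = \hat x$ guaranteed by membership in $I_{\psi,\hat x}$, so condition (1) of Definition~\ref{defwrtpsinew} is preserved and all three sections agree at $\hat y$.

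Next, the core estimate. Using the rephrasing \eqref{equation28.0}, membership $\phi_i \in I_{\psi,\hat x}$ means there are constants $L_i\geq 1$, $M_i\geq 0$ with
\begin{equation*}
d(\phi_i(y),\psi(y)) = \norm{\phi_i(y) - \psi(y)} \leq L_i\,\norm{\psi(\hat y) - \psi(y)} + M_i, \qquad \forall y\in Y.
\end{equation*}
Then for $\phi_t$ I would estimate, using the triangle inequality for the norm and convexity of $t\mapsto t$,
\begin{equation*}
\begin{aligned}
\norm{\phi_t(y) - \psi(y)} &= \norm{t(\phi_1(y) - \psi(y)) + (1-t)(\phi_2(y) - \psi(y))} \\
&\leq t\,\norm{\phi_1(y) - \psi(y)} + (1-t)\,\norm{\phi_2(y) - \psi(y)} \\
&\leq \big(t L_1 + (1-t)L_2\big)\norm{\psi(\hat y) - \psi(y)} + t M_1 + (1-t) M_2.
\end{aligned}
\end{equation*}
Setting $L_t := t L_1 + (1-t) L_2$ (which is $\geq 1$ since it is a convex combination of numbers $\geq 1$) and $M_t := t M_1 + (1-t) M_2 \geq 0$, this is exactly the inequality \eqref{equation28.0} for $\phi_t$ with respect to $\psi$ at $\hat x$. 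Hence $\phi_t$ is intrinsically $(L_t,M_t)$-quasi-isometric with respect to $\psi$ at $\hat x$, so $\phi_t \in I_{\psi,\hat x}$, proving convexity.

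I do not expect a serious obstacle here; the statement is essentially a formal consequence of the triangle inequality and the convexity of the identity function, combined with linearity of $\pi$ to keep $\phi_t$ a section. The one subtle point worth stating carefully is the interpretation of "$\pi$ is linear and a quotient map" — specifically that it forces $\phi_t$ to again satisfy $\pi\circ\phi_t = \mathrm{id}_Y$, which requires that the linear structure on $Y$ be such that $\pi$ is genuinely linear and that constant convex combinations act as the identity; I would spell this out in one line at the start so the reader sees why $\phi_t$ is a section. Everything else is the routine computation displayed above.
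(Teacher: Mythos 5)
Your proof is correct and takes essentially the same route as the paper's: linearity of $\pi$ to verify that $\phi_t$ is still a section agreeing with $\psi$ at $\hat y$, then the triangle inequality in the norm to pass the estimate \eqref{equation28.0} through the convex combination. Your bookkeeping of the constants ($L_t = tL_1+(1-t)L_2 \geq 1$, $M_t = tM_1+(1-t)M_2 \geq 0$) is in fact slightly tidier than the paper's, and your remark about the implicit linear structure on $Y$ needed for $\pi\circ\phi_t=\mathrm{id}_Y$ is a fair observation about the hypotheses.
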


 \begin{proof} Let $\phi , \eta \in I _{\psi , \hat x} $ and let $t\in [0,1].$ We want to show that $$w := t\phi + (1-t)\eta \in I _{\psi , \hat x}.$$ 
Notice that by linearity of $\pi$ it holds $$\pi(w(y))=\pi (t\phi (y)+ (1-t)\eta (y)) =t\pi(\phi (y))+(1-t) \pi(\eta (y))=y,$$ for any $y\in Y$, i.e., $w$ is a section of $\pi.$ Moreover, $w (\bar y)= \phi (\bar y)=\eta (\bar y)=\hat x$ for some $\bar y \in Y$ and for every $y \in Y$ we have
\begin{equation*}
\begin{aligned}
\|w (y)-\psi(y)\|  & =\| t(\phi (y)- \psi ( y)) + (1-t) (\eta (y)-\psi( y))\|,
\end{aligned}
\end{equation*}
and so
\begin{equation*}
\begin{aligned}
\|w (y)-\psi (y)\| & \leq t \|\phi (y)-\psi ( y)\|+ (1-t) \|\eta (y)-\psi(y)\|.
\end{aligned}
\end{equation*}
Hence, 
 \begin{equation*}
\begin{aligned}
d(w (y),\psi  (y)) & \leq tL_\phi  d(\psi(\bar y), \psi (y))+M_\phi + (1-t)L_\eta  d(\psi(\bar y), \psi (y))+M_\eta\\ & = [t(L_\phi -L_\eta) + L_\eta ]  d(\psi(\bar y), \psi (y))+(M_\phi + M_\eta ),\\
\end{aligned}
\end{equation*}
for every $y\in Y,$ as desired.
\end{proof}

\subsection{Vector space: version 1}
In this section we show that a 'large' class of intrinsically quasi-isometric sections is a vector space over $\R$ or $\C$ defined as follows.    Notice that  it is no possible to obtain that $I_{\psi , \hat x }$ is a vector space for $\R$ since the simply observation that if $\psi (\hat y) = \hat x$ then $\psi (\hat y ) + \psi (\hat y )  \ne \hat x.$ On the other hand, we have the following result.

   \begin{theorem}\label{theorem24} Let $\pi :X \to Y$ is a linear and quotient map from a normed space $X$ to a topological space $Y.$ Assume also that $\psi :Y \to X$ is a section of $\pi$  and $\{\lambda \hat x\, :\, \lambda \in \R^+\} \subset X$ with $\hat x \in \psi (Y).$
   
Then, for any $\alpha \in (0,1],$ the set $\bigcup _{\lambda \in \R^+} I_{\lambda \psi , \lambda \hat x}  \cup \{ 0\}$ is a vector space over $\R $ or $\C .$
       \end{theorem}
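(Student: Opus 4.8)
The plan is to show that $W := \bigcup_{\lambda \in \R^+} I_{\lambda \psi, \lambda \hat x} \cup \{0\}$ is closed under addition and under scalar multiplication, the zero element being the (constant-like) element $0$ adjoined by hand. The key structural observation, coming from \eqref{equation28.0}, is that membership in $I_{\mu\psi,\mu\hat x}$ for a section $\phi$ with $\phi(\bar y)=\mu\hat x$ is equivalent to the estimate
\begin{equation*}
\|\phi(y) - \mu\psi(y)\| \le L\,\|\mu\psi(\bar y) - \mu\psi(y)\| + M = L\mu\,\|\psi(\bar y) - \psi(y)\| + M, \quad \forall y \in Y,
\end{equation*}
so the ``reference section'' against which we measure is the $\mu$-scaled copy $\mu\psi$, and $\phi - \mu\psi$ is a bounded-by-affine perturbation of it. I would first record this reformulation explicitly, together with the fact that $\pi$ linear forces any $\phi \in I_{\mu\psi,\mu\hat x}$ to satisfy $\pi(\phi(y)) = y$, hence $\pi(\mu\psi(y)) = y$ as well, which is consistent since $\pi$ is linear and $\psi$ is a section. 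Note $\pi(\hat x) = \bar y$ is forced and independent of $\mu$; this is why all the scaled cones $C^{\lambda\psi}_{\lambda\hat x,L}$ are anchored at the \emph{same} base point $\bar y$, which is what makes the union behave well.

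Second I would handle scalar multiplication: given $\phi \in I_{\lambda\psi,\lambda\hat x}$ with constants $(L,M)$ and base point $\bar y$, and a scalar $c$, I claim $c\phi \in I_{|c|\lambda\,\psi, |c|\lambda\,\hat x}$ (for $c \ne 0$; for $c=0$ we land in $\{0\}$). Indeed $\pi(c\phi(y)) = c\pi(\phi(y)) = cy$ — wait, this is the point where one must be careful: $c\phi$ is a section only if $c = 1$, unless we reinterpret ``section of $\pi$'' on the scaled target. Here the hypothesis ``$\{\lambda\hat x : \lambda\in\R^+\}\subset X$'' and the appearance of the \emph{family} $I_{\lambda\psi,\lambda\hat x}$ indexed over all $\lambda$ is what saves us: the natural reading is that elements of $I_{\lambda\psi,\lambda\hat x}$ need not literally satisfy $\pi\circ\phi = \mathrm{id}_Y$ but rather the cone-avoidance condition (2) of Definition~\ref{defwrtpsinew} relative to the section $\lambda\psi$, with the base point at $\lambda\hat x$. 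Under that reading, scalar multiplication by $c$ rescales the inequality $\|\phi(y)-\lambda\psi(y)\| \le L\lambda\|\psi(\bar y)-\psi(y)\|+M$ to $\|c\phi(y)-|c|\lambda\,\mathrm{sgn}(c)\psi(y)\| \le |c|L\lambda\|\psi(\bar y)-\psi(y)\| + |c|M$, and after absorbing the sign into a reindexing this exhibits $c\phi$ as intrinsically $(L, |c|M)$-quasi-isometric with respect to $|c|\lambda\,\psi$ (or its reflection) at $|c|\lambda\hat x$ — hence in $W$. I would present this carefully, since the bookkeeping of which $\lambda$ and which base point one lands in is the whole content.

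Third, for addition: take $\phi \in I_{\lambda\psi,\lambda\hat x}$ with constants $(L_\phi,M_\phi)$ and $\eta \in I_{\mu\psi,\mu\hat x}$ with constants $(L_\eta,M_\eta)$, both necessarily with base point $\bar y$ since $\psi(\bar y)=\hat x$ pins it down. Then for every $y$,
\begin{equation*}
\|(\phi+\eta)(y) - (\lambda+\mu)\psi(y)\| \le \|\phi(y)-\lambda\psi(y)\| + \|\eta(y)-\mu\psi(y)\| \le (L_\phi\lambda + L_\eta\mu)\|\psi(\bar y)-\psi(y)\| + (M_\phi+M_\eta),
\end{equation*}
and since $(L_\phi\lambda+L_\eta\mu) = L'(\lambda+\mu)$ for $L' := (L_\phi\lambda+L_\eta\mu)/(\lambda+\mu) \ge 1$ (it is a convex-ish combination of $L_\phi,L_\eta \ge 1$, hence $\ge 1$), this says exactly that $\phi+\eta \in I_{(\lambda+\mu)\psi,(\lambda+\mu)\hat x}$ with constants $(L', M_\phi+M_\eta)$ — provided $\lambda+\mu \ne 0$, which holds since $\lambda,\mu\in\R^+$; the degenerate cases ($\phi$ or $\eta$ equal to $0$, or $\phi+\eta = 0$) are absorbed into the adjoined $\{0\}$. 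I would then note associativity, commutativity, existence of additive inverses (via the scalar $c=-1$ step landing in $I_{\lambda\psi,\lambda\hat x}$ with a reflected reference section, whose sum with $\phi$ is the bounded section $0$), and the vector-space axioms relating $+$ and scalar $\cdot$, which are all inherited pointwise from the normed space $X$ once closure is established; over $\C$ the same argument runs with $|c|$ replaced by the modulus.

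The main obstacle — and the step I would spend the most care on — is the first/second paragraph issue: pinning down the precise meaning of ``$\phi$ is a section of $\pi$'' for elements of $I_{\lambda\psi,\lambda\hat x}$ when $\lambda \ne 1$, and verifying that scalar multiplication and addition genuinely keep us inside the \emph{union} over all $\lambda$ (not inside a single $I_{\psi,\hat x}$, which Theorem~\ref{theorem24}'s preamble explicitly warns is impossible). Everything else is the triangle inequality plus the elementary fact that a positive combination of reals $\ge 1$ is again $\ge 1$, together with the remark that all base points coincide because $\hat x \in \psi(Y)$ determines $\bar y = \pi(\hat x)$ uniquely.
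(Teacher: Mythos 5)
Your proposal is correct and follows essentially the same route as the paper: closure under addition is obtained by showing $\phi+\eta \in I_{(\lambda+\mu)\psi,(\lambda+\mu)\hat x}$ via the triangle inequality, and closure under scalar multiplication by rescaling the defining estimate to land in $I_{\beta\lambda\psi,\beta\lambda\hat x}$, exactly as in the paper's proof (which normalizes to $\delta_1=\delta_2=1$ ``for simplicity''). The two delicate points you flag --- that $\phi+\eta$ is literally a section of $\tfrac{1}{2}\pi$ rather than of $\pi$, and that a negative scalar $\beta$ produces an index $\beta\lambda\notin\R^+$ --- are both present and left unaddressed in the paper's own argument, so your extra care there goes beyond, rather than against, the published proof.
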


\begin{proof} 
Let $\phi , \eta \in \bigcup_{\lambda \in \R ^+} I_{\lambda \psi , \lambda \hat x}$ and $\beta  \in \R  -\{0\}.$ We want to show that  
\begin{enumerate}
\item $w:=\phi + \eta \in \bigcup_{\lambda \in \R ^+} I_{\lambda \psi , \lambda \hat x }.$
\item $\beta \phi \in \bigcup_{\lambda \in \R ^+} I_{\lambda \psi , \lambda \hat x }.$
\end{enumerate}

(1). If $\phi  \in I_{\delta_1 \psi , \delta _1 \hat x  }$ and $ \eta \in  I _{\delta _2 \psi , \delta _2 \hat x }$ for some $\delta _1, \delta _2 \in \R^+$ it holds $$w \in  I_{(\delta _1 + \delta _2) \psi , ( \delta _1 + \delta _2) \hat x}.$$ 
For simplicity, we choose $\phi , \eta \in  I_{\psi , \hat x  }$ and so it remains to prove $$w \in  I_{2 \psi , 2 \hat x }.$$

By linear property of $\pi$, $w$ is a section of $1/2 \pi .$ On the other hand, if $\psi (\bar y) = \hat x,$ then  $w(\bar y)= \phi (\bar y)+ \eta(\bar y) = 2 \psi (\bar y) \in X.$ Moreover, using \eqref{equation28.0}, we deduce 
\begin{equation*}
\begin{aligned}
\|w(y)- 2\psi (y)\| & = \| \phi ( y)+ \eta( y) - 2 \psi (y)\|\\
& \leq \|\phi ( y)- \psi (y)\| +  \|\eta ( y)- \psi (y)\|\\
& \leq 2L \|\psi (\bar y)- \psi (y)\|+2M \\
& = L \| 2 \psi (\bar y) -2 \psi ( y)\| +2M,
\end{aligned}
\end{equation*}
for any $y \in Y,$ as desired.

(2). If $\phi  \in I_{\delta_1 \psi , \delta _1 \hat x  }$ then in a similar way to the point (1) it is possible to deduce that $\beta \phi \in I_{\beta \delta_1 \psi , \beta \delta _1 \hat x  }.$
\end{proof}

\subsection{Vector space: version 2} In this section we present another vector space over $\R$ or $\C$ for a suitable class of quasi-isometric sections of a linear map. Here we do not ask the condition given by Definition \ref{defwrtpsinew.9apr.23} but we want that the distance between two points belong to the same fiber is finite.

   \begin{theorem}\label{theorem30} Let $\pi :X \to Y$ is a linear and quotient map from a normed space $X$ to a topological space $Y.$ Assume also that the distance between two points of the same fiber is bounded by $\ell <\infty.$ Then, the set of all quasi-isometric sections of $\pi$ joint with the zero map is a vector space  over $\R$ or $\C.$
       \end{theorem}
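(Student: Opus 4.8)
The plan is to follow the same two-part strategy as in Theorem~\ref{theorem24}: show the set $V$ of all intrinsically quasi-isometric sections of $\pi$ together with the zero map is closed under addition and under scalar multiplication. The key simplification here is the standing hypothesis that any two points in the same fiber are at distance at most $\ell$; since $\phi_0(y)$ and $\psi_0(y)$ both lie in $\pi^{-1}(y)$ whenever $\phi_0,\psi_0$ are sections, this makes the quantity $d(\phi_0(y),\pi^{-1}(y'))$ and $d(\psi_0(y),\pi^{-1}(y'))$ differ by at most $\ell$, so the intrinsic quasi-isometry constants of two sections of $\pi$ are mutually controlled up to an additive $\ell$. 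I would first record this observation as the main mechanism.

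First I would check that the zero map is a section of $\pi$: by linearity $\pi(0)=0$, and more to the point $\pi\circ 0$ must equal $\mathrm{id}_Y$, so implicitly one needs $Y$ to be a point or the statement is read as ``$V$ with the zero map adjoined as a formal element''—I would phrase the closure conditions so the zero map behaves as the additive identity, exactly as done in Theorem~\ref{theorem24} where $\{0\}$ is simply appended. For \textbf{scalar multiplication}: if $\phi\in V$ is intrinsically $(L,M)$-quasi-isometric and $\beta\in\R\setminus\{0\}$, then $\beta\phi$ is a section of $\beta\pi$ (equivalently of $\pi$ after rescaling, by linearity), and for all $y_1,y_2\in Y$,
\begin{equation*}
\|\beta\phi(y_1)-\beta\phi(y_2)\| = |\beta|\,\|\phi(y_1)-\phi(y_2)\| \leq |\beta| L\, d(\phi(y_1),\pi^{-1}(y_2)) + |\beta| M,
\end{equation*}
and since $\pi^{-1}(y_2)$ for the rescaled map is $\beta\,\pi^{-1}(y_2)$, one has $d(\beta\phi(y_1),\beta\pi^{-1}(y_2)) = |\beta|\,d(\phi(y_1),\pi^{-1}(y_2))$, giving that $\beta\phi$ is intrinsically $(L,|\beta|M)$-quasi-isometric. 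For $\beta=0$ we land on the zero map, which is why it must be adjoined.

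For \textbf{addition}: let $\phi$ be intrinsically $(L_\phi,M_\phi)$- and $\eta$ be $(L_\eta,M_\eta)$-quasi-isometric, and set $w=\phi+\eta$, a section of $2\pi$ by linearity. For $y_1,y_2\in Y$ I would estimate
\begin{equation*}
\|w(y_1)-w(y_2)\| \leq \|\phi(y_1)-\phi(y_2)\| + \|\eta(y_1)-\eta(y_2)\|
\end{equation*}
and bound each term: $\|\phi(y_1)-\phi(y_2)\|\leq L_\phi\, d(\phi(y_1),\pi^{-1}(y_2))+M_\phi$, and similarly for $\eta$. Now I invoke the fiber-diameter bound: $d(\eta(y_1),\pi^{-1}(y_2))\leq d(\phi(y_1),\pi^{-1}(y_2)) + d(\phi(y_1),\eta(y_1)) \leq d(\phi(y_1),\pi^{-1}(y_2)) + \ell$, since $\phi(y_1),\eta(y_1)\in\pi^{-1}(y_1)$. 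Hence both terms are controlled by $d(\phi(y_1),\pi^{-1}(y_2))$ up to constants, and finally one relates $d(\phi(y_1),\pi^{-1}(y_2))$ to $d(w(y_1),(2\pi)^{-1}(y_2))$: again using that $w(y_1)/2$ and $\phi(y_1)$ lie in the same fiber (up to the rescaling bookkeeping), this differ by at most $O(\ell)$. Collecting, $w$ is intrinsically $(L',M')$-quasi-isometric with $L'=L_\phi+L_\eta$ and $M'=M_\phi+M_\eta+O((L_\phi+L_\eta)\ell)$.

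The main obstacle I expect is purely bookkeeping: keeping straight that $\phi+\eta$ is a section of $2\pi$ rather than $\pi$, and correctly identifying the fiber $(2\pi)^{-1}(y)$ with $2\,\pi^{-1}(y)$ so that the distance-to-fiber terms rescale as claimed; the substantive content — that the fiber-diameter bound $\ell$ lets one swap $\phi$ for $\eta$ inside $d(\,\cdot\,,\pi^{-1}(y_2))$ at the cost of an additive constant — is a one-line triangle inequality. The $\C$ case is identical with $|\beta|$ read as the complex modulus, provided $X$ is a complex normed space and $\pi$ is $\C$-linear.
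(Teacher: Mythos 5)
Your proposal is correct in substance and follows essentially the same strategy as the paper: both arguments treat $\phi+\eta$ and $\beta\phi$ as sections of a rescaled projection, identify the rescaled fibers with dilates of the original ones, and use the fiber-diameter bound $\ell$ to absorb the error terms. One recurring slip worth fixing is a reciprocal: since $\pi\circ(\phi+\eta)=2\,\mathrm{id}_Y$, the sum is a section of $\tfrac12\pi$, not of $2\pi$, and likewise $\beta\phi$ is a section of $\tfrac1\beta\pi$; your identification of the relevant fiber as $2\,\pi^{-1}(y_2)$ (i.e. $\pi^{-1}(2y_2)$) is nevertheless the correct set, so only the label is off. The one genuine difference is in the addition step: the paper picks the nearest point $a$ in the rescaled fiber and bounds $\|a-\phi(y_2)-\eta(y_2)\|\le\|\tfrac12 a-\phi(y_2)\|+\|\tfrac12 a-\eta(y_2)\|\le 2\ell$ directly, never using the quasi-isometry constants of the summands and obtaining constants $(1,2\ell)$, whereas you sum the two individual quasi-isometry estimates and then reconcile the fibers via $\ell$, which also works but yields constants depending on $L_\phi+L_\eta$; the paper's route is shorter and makes visible that, under the fiber-diameter hypothesis, every section is automatically $(1,\ell)$-quasi-isometric.
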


\begin{proof} Let $\lambda  \in \R  -\{0\}$ and $\phi , \psi $ be two quasi-isometric sections of $\pi$ with constants $(L_\phi , M_\phi)$ and $(L_\psi , M_\psi),$ respectively.  We want to show that  
\begin{enumerate}
\item $\phi + \psi $ is a quasi-isometric section of $\pi.$
\item $\lambda \phi $ is a quasi-isometric section of $\pi.$
\end{enumerate}
(1). The fact that $ \phi +\psi$ is a section of $1/2\pi$ follows from the linearity of $\pi.$ Moreover, for any $y_1, y_2 \in Y$  we consider $a \in 1/2\pi^{-1}(y_2)$ such that $d( \phi (y_1) +\psi (y_1), 1/2 \pi ^{-1} (y_2)) = d( \phi (y_1) +\psi (y_1), a)$ and so 
\begin{equation*}
\frac 1 2 a \in \pi ^{-1} (y_2).
\end{equation*}
As a consequence, by triangle inequality,
  \begin{equation*}
\begin{aligned}
\| \phi (y_1) +\psi (y_1)- \phi (y_2) - \psi (y_2) \| & \leq \| \phi (y_1) +\psi (y_1)- a \| +\| a- \phi (y_2) - \psi (y_2) \| \\
& \leq  d( \phi (y_1) +\psi (y_1), a)  +\| 1/2 a- \phi (y_2)\| +\| 1/2a - \psi (y_2) \| \\
& \leq d( \phi (y_1) +\psi (y_1), 1/2 \pi ^{-1} (y_2)) +2\ell ,
\end{aligned}
\end{equation*}
where in the last inequality we used that the distance between two points in the same fiber is bounded by $\ell.$

(2). The fact that $\lambda \phi $ is a section of $1/\lambda \pi$ is trivial using the linearity of $\pi$. On the other hand, for any $y_1, y_2 \in Y$
  \begin{equation*}
\begin{aligned}
\| \lambda \phi (y_1) -\lambda \phi (y_2) \| \leq L_\phi |\lambda | d( \phi (y_1),  \pi^{-1} (y_2) ) +M_\phi= L_\phi d(\lambda \phi (y_1),  (1/\lambda \pi)^{-1} (y_2)) + M_\phi,
\end{aligned}
\end{equation*}
i.e., the thesis holds. This fact follows by these observations:
\begin{enumerate}
\item  if $ d( \phi (y_1),  \pi^{-1} (y_2) )=  d( \phi (y_1), a)$ then $ |\lambda | d( \phi (y_1),  \pi^{-1} (y_2) )= \|\lambda \phi (y_1)-\lambda a\|.$
\item $\lambda a \in \pi ^{-1} (\lambda y).$
\item $ \pi ^{-1} (\lambda y)=  (1/\lambda \pi)^{-1} ( y).$
\end{enumerate}
The second point is true because using the linearity of $\pi$ we have that $\pi (\lambda a) = \lambda \pi(a)=\lambda y.$ Finally, the third point holds since 
\begin{equation*}
 \pi^{-1} (\lambda y) =\{ x \in X \,:\,  \pi (x) = \lambda y\} = \{ x \in X \,:\, 1/ \lambda \pi (x) =  y\}= (1/ \lambda \pi)^{-1}  (y),
\end{equation*}
as desired.

\end{proof}

\section{An equivalence relation}\label{An equivalence relation} In this section $X$ is a metric space, $Y$ a topological space and $\pi:X \to Y$ a quotient map (we do $not$ ask that $\pi$ is a linear map). 
 We stress that Definition~\ref{defwrtpsinew} does not induce  an equivalence relation, because of lack of symmetry in the right-hand side of \eqref{equation28.0}. As a consequence we must ask a stronger condition  in order to obtain  an equivalence relation.
\begin{defi}[Intrinsic quasi-isometric  with respect to  a section in strong sense]\label{defwrtpsinew.1}
 Given  sections   $\phi, \psi :Y\to X$   of $\pi$. We say that   $\phi $ is {\em intrinsically $(L,M)$-quasi-isometric with respect to  $\psi$ at point $\hat x$ in strong sense}, with $L\geq1, M \geq 0$ and $\hat x\in X$, if
\begin{enumerate}
\item $\hat x\in \psi(Y)\cap \phi (Y);$
\item it holds
\begin{equation}\label{equation11aprile}
 d(\phi (y), \psi (y)) \leq L \min\{  d(\psi(\hat y), \psi (y)),  d(\psi(\hat y), \phi (y))\} +M,\qquad \forall y\in Y. 
\end{equation}
\end{enumerate}
\end{defi}

  Now we are able to give the main theorem.
  
   \begin{theorem}\label{theorem26aprile}
   Let $\pi :X \to Y$ be a quotient map from a metric space $X$ to a topological space $Y.$ Assume also that $\psi :Y \to X$ is a  section of $\pi$ and $\hat x \in X.$ Then, being intrinsically quasi-isometric with respect to $\psi$ at point $\hat x$ in strong sense induces an equivalence relation. We will write the class of equivalence of $\psi$ at point $\hat x$ as
    \begin{equation*}
\begin{aligned}
[I_{\psi , \hat x}]& :=\{\phi :Y\to X \, \mbox{a section of $\pi$}  :\, \phi \mbox{ is intrinsically $(L, M)$-quasi-isometric with }\\ 
& \qquad \mbox{respect to $\psi$ at point $\hat x$ in strong sense, for some $L\geq 1, M \geq 0$} \}.
\end{aligned}
\end{equation*}
   
      \end{theorem}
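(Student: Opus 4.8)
The plan is to verify the three defining properties of an equivalence relation---reflexivity, symmetry, and transitivity---for the relation ``$\phi$ is intrinsically $(L,M)$-quasi-isometric with respect to $\psi$ at point $\hat x$ in strong sense'' on the set of sections $\phi:Y\to X$ of $\pi$ whose graph passes through $\hat x$. The key structural feature that makes this work, and which motivated Definition~\ref{defwrtpsinew.1}, is the symmetrized right-hand side of \eqref{equation11aprile}: the term $\min\{d(\psi(\hat y),\psi(y)),\,d(\psi(\hat y),\phi(y))\}$ is built so that the roles of $\phi$ and $\psi$ can be exchanged at the cost of only enlarging the constants.

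First I would check reflexivity: for $\phi=\psi$, the left-hand side of \eqref{equation11aprile} is $d(\psi(y),\psi(y))=0$, which is trivially bounded by $L\min\{\cdots\}+M$ for any $L\ge1$, $M\ge0$, so $\psi$ relates to itself at $\hat x$ (noting $\hat x\in\psi(Y)$ by hypothesis). For symmetry, suppose $\phi$ is $(L,M)$-quasi-isometric with respect to $\psi$ at $\hat x$ in strong sense, so \eqref{equation11aprile} holds; I want constants $(L',M')$ witnessing that $\psi$ is quasi-isometric with respect to $\phi$ at $\hat x$, i.e. $d(\psi(y),\phi(y))\le L'\min\{d(\phi(\hat y),\phi(y)),d(\phi(\hat y),\psi(y))\}+M'$. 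The left-hand sides agree. For the right-hand side, since $\phi(\hat y)=\psi(\hat y)=\hat x$, I would use the triangle inequality to compare $d(\phi(\hat y),\phi(y))=d(\psi(\hat y),\phi(y))$ with $d(\psi(\hat y),\psi(y))$: indeed $d(\psi(\hat y),\psi(y))\le d(\psi(\hat y),\phi(y))+d(\phi(y),\psi(y))$, and then absorb the extra $d(\phi(y),\psi(y))$ term using \eqref{equation11aprile} itself. A short computation of this kind shows that the $\min$ over the $\phi$-based distances is comparable (up to multiplicative constant $1+L$ and additive constant $M$) to the $\min$ over the $\psi$-based distances, giving $L'$ and $M'$ in terms of $L$ and $M$.

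For transitivity, assume $\phi$ is $(L_1,M_1)$-quasi-isometric with respect to $\psi$ at $\hat x$ in strong sense and $\psi$ is $(L_2,M_2)$-quasi-isometric with respect to $\chi$ at $\hat x$ in strong sense; I want constants $(L_3,M_3)$ for $\phi$ with respect to $\chi$ at $\hat x$. Here $\hat x\in\phi(Y)\cap\psi(Y)\cap\chi(Y)$ and there is $\hat y$ with $\phi(\hat y)=\psi(\hat y)=\chi(\hat y)=\hat x$. I would estimate $d(\phi(y),\chi(y))\le d(\phi(y),\psi(y))+d(\psi(y),\chi(y))$, bound the first summand by \eqref{equation11aprile} for the pair $(\phi,\psi)$ and the second by \eqref{equation11aprile} for the pair $(\psi,\chi)$. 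This produces an upper bound involving $d(\chi(\hat y),\chi(y))$ and $d(\chi(\hat y),\psi(y))$; the former is already of the right form, and the latter I would convert to $d(\chi(\hat y),\phi(y))$ via the triangle inequality $d(\chi(\hat y),\psi(y))\le d(\chi(\hat y),\phi(y))+d(\phi(y),\psi(y))$, again absorbing the residual $d(\phi(y),\psi(y))$ term using the $(\phi,\psi)$ estimate. Collecting terms yields $d(\phi(y),\chi(y))\le L_3\min\{d(\chi(\hat y),\chi(y)),d(\chi(\hat y),\phi(y))\}+M_3$ with $L_3,M_3$ explicit polynomial expressions in $L_1,L_2,M_1,M_2$.

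I expect the main obstacle to be bookkeeping rather than conceptual: one must be careful that every manipulation of the $\min$ term stays a genuine upper bound (when replacing $\min\{a,b\}$ it is legitimate to bound it by either $a$ or $b$, but one must make a consistent choice and ensure the \emph{other} branch of the target $\min$ is also controlled), and that the self-referential use of \eqref{equation11aprile} to absorb leftover $d(\phi(y),\psi(y))$ terms does not create a circular estimate---it does not, because that term appears with a finite coefficient and can simply be moved to the left or dominated outright. A minor point worth stating explicitly is that all three conditions in Definition~\ref{defwrtpsinew.1}(1), namely $\hat x$ lying in the relevant graphs, are preserved along the chain, so the relation is well defined on the set $\{\phi:\ \hat x\in\phi(Y)\}$ and the class $[I_{\psi,\hat x}]$ is exactly its equivalence class.
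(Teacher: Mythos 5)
Your proposal is correct and follows essentially the same route as the paper: verify reflexivity, symmetry, and transitivity directly, with symmetry coming from the symmetrized $\min$ in \eqref{equation11aprile} and transitivity from the triangle inequality. Two small remarks. For symmetry you work harder than necessary: since $\phi(\hat y)=\psi(\hat y)=\hat x$, the set $\{d(\psi(\hat y),\psi(y)),\,d(\psi(\hat y),\phi(y))\}=\{d(\hat x,\psi(y)),\,d(\hat x,\phi(y))\}$ is literally unchanged when the roles of $\phi$ and $\psi$ are swapped, so \eqref{equation11aprile} is verbatim symmetric with the \emph{same} constants $(L,M)$ and no triangle-inequality absorption is needed. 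For transitivity your bookkeeping is in fact more careful than the paper's: the paper's displayed chain asserts the conclusion with $\tilde L=\min\{L_1,L_2\}$, which does not follow from the two hypotheses (one must pass through $d(\hat x,\psi(y))$ via the triangle inequality, which inflates the constant to something like $L_1+L_2+L_1L_2$, exactly the polynomial dependence you predict); since the theorem only requires \emph{some} admissible $(L_3,M_3)$, your version is the one that actually closes the argument.
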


An interesting observation is that, considering $I_{\psi , \hat x },$ the intrinsic constants $L$ and $M$ can be change but it is fundamental that the point $\hat x$ is a common one for the every section. 

   \begin{proof}
We need to show:
\begin{enumerate}
\item reflexive property;
\item symmetric property;
\item transitive property;
\end{enumerate}

   (1). It is trivial that $\phi \backsim \phi.$
   
   (2). If $\phi \backsim \psi,$ then $\psi \backsim \phi .$ This follows from \eqref{equation11aprile}.
   
   (3). We know that $\phi \backsim \psi$ and $\psi \backsim \eta . $ Hence, $\hat x= \phi (\hat y)= \psi (\hat y)= \eta (\hat y).$ Moreover,  by \eqref{equation11aprile}, it holds
   \begin{equation*}\label{equation11aprile.0}
   \begin{aligned}
 d(\phi (y), \psi (y)) & \leq L_1 \min\{  d(\psi(\hat y), \psi (y)),  d(\psi(\hat y), \phi (y))\} +M_\phi,\\
  d(\psi (y), \eta (y)) & \leq L_2 \min\{  d(\eta(\hat y), \eta (y)),  d(\eta(\hat y), \psi (y))\} +M_\eta,\\
  \end{aligned}
\end{equation*}
for any $ y\in Y$ and, consequently, if $\tilde L = \min\{ L_1, L_2\},$ then
   \begin{equation}\label{equation11aprile.0}
   \begin{aligned}
 d(\phi (y), \psi (y)) & \leq  d(\phi (y), \psi (y)) +  d(\psi (y), \eta (y)) \\
  & \leq \tilde L \min\{  d(\eta(\hat y), \eta (y)),  d(\psi(\hat y), \phi (y))\} +M_\phi + M_\eta ,\\
   & = \tilde L \min\{  d(\eta(\hat y), \eta (y)),  d(\eta(\hat y), \phi (y))\} +M_\phi + M_\eta,\\
  \end{aligned}
\end{equation}
for any $ y\in Y.$ This means that $\phi \backsim \eta ,$ as desired.
      \end{proof}

\section{Proof of  Ahlfors-David regularity}\label{theoremAhlforsNEW} 

We need to a preliminary result.
 \begin{lem}\label{pseudodistance}
 Let $X$ be a metric  space, $Y$   a topological space, and $\pi:X\to Y$ a quotient map. If $\phi :Y \to X$ is an intrinsically $(L,M)$-quasi-isometric section of $\pi$ with $L\geq1$ and $M \geq 0,$ then it holds
		\begin{equation}\label{inclusionepalle}
\pi \left(B\left(p,\frac r L\right) \right) \subset \pi ( B(p,r +M) \cap \phi (Y)) \subset \pi (B(p,r+M)), \quad \forall p\in \phi (Y), \forall r>0.
\end{equation}
		
  \end{lem}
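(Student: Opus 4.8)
The plan is to establish the chain of inclusions in \eqref{inclusionepalle} from left to right, using only the definition of an intrinsically $(L,M)$-quasi-isometric section together with elementary properties of the quotient map $\pi$. The rightmost inclusion $\pi(B(p,r+M)\cap\phi(Y))\subset\pi(B(p,r+M))$ is immediate, since intersecting with $\phi(Y)$ only shrinks the set before applying $\pi$; no hypothesis is needed here. So the real content is the leftmost inclusion.

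For the leftmost inclusion, I would take an arbitrary point $y\in\pi\bigl(B(p,r/L)\bigr)$, so there is $x\in B(p,r/L)$ with $\pi(x)=y$. The goal is to produce a point of $\phi(Y)$ lying in $B(p,r+M)$ whose $\pi$-image is $y$; the natural candidate is $\phi(y)\in\pi^{-1}(y)\cap\phi(Y)$. To control $d(p,\phi(y))$, I would first note $d(p,\pi^{-1}(y))\leq d(p,x)<r/L$ because $x\in\pi^{-1}(y)$. Then, writing $p=\phi(y_p)$ for the appropriate $y_p\in Y$ (since $p\in\phi(Y)$), the intrinsic quasi-isometry inequality applied with $y_1=y_p$ and $y_2=y$ gives
\begin{equation*}
d(\phi(y_p),\phi(y))\leq L\,d(\phi(y_p),\pi^{-1}(y))+M = L\,d(p,\pi^{-1}(y))+M < L\cdot\frac{r}{L}+M = r+M.
\end{equation*}
Hence $\phi(y)\in B(p,r+M)\cap\phi(Y)$ and $\pi(\phi(y))=y$, so $y\in\pi(B(p,r+M)\cap\phi(Y))$, which is exactly what is needed.

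I do not anticipate a genuine obstacle here: the statement is essentially a repackaging of the defining inequality, and the only mild subtlety is remembering that $d(p,\pi^{-1}(y))$ is an infimum, so the estimate $d(p,\pi^{-1}(y))\leq d(p,x)$ is the correct direction (we bound the infimum above by one particular value). One should also note that the argument uses $\pi\circ\phi=\mathrm{id}_Y$ to guarantee $\pi(\phi(y))=y$, and that $\phi(Y)$ contains $\phi(y)$ by construction. If one wanted the inclusions to hold for every $r>0$ including via the formulation with $r/L$, no positivity issues arise since $r/L>0$ whenever $r>0$. Thus the proof is short and the only care required is bookkeeping of which point plays the role of $\phi(y_1)$ versus $\phi(y_2)$ in Definition~\ref{def_ILS}.
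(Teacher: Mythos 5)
Your proof is correct and follows essentially the same route as the paper's: both arguments take the candidate point $\phi(\pi(x))$ (the paper's $g=\phi(\pi(q))$), bound $d(p,\pi^{-1}(y))$ above by $d(p,x)<r/L$ since $x\in\pi^{-1}(y)$, and apply the defining inequality to conclude $d(p,\phi(y))<r+M$. The bookkeeping of which point is $\phi(y_1)$ versus $\phi(y_2)$ matches the paper's usage as well.
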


  \begin{proof}
 Regarding the first inclusion, fix $p\in \phi (Y), r>0$ and $q\in B(p, \frac r L).$  We need to show that $\pi (q) \in \pi (\phi (Y) \cap B(p,r +M)).$ Actually, it is enough to prove that 
\begin{equation}\label{equation2.6}
\phi (\pi (q)) \in B(p,r+M),
\end{equation}
 because if we take $g:= \phi (\pi (q)),$ then $g\in \phi (Y)$ and 
 \begin{equation*}
 \pi (g)= \pi (\phi (\pi (q))) =\pi (q) \in \pi (\phi (Y) \cap B(p,r +M)).
\end{equation*}
 
 Hence using the intrinsic quasi-isometric property of $\phi$ and the fact that $q$ and $ g$ belong to the same fiber because $ \pi (g) =\pi (q),$ we have that for any $p, q, g \in \phi (Y)$ with $g= \phi (\pi (q)),$
 \begin{equation}
d(p,g) \leq L d(p, \pi^{-1} (\pi (g)) )+M =  L d(p, \pi^{-1} (\pi (q)) )+M \leq L d(p,q) +M < r+M,
\end{equation}
i.e., \eqref{equation2.6} holds, as desired.  Finally, the second inclusion in \eqref{inclusionepalle} follows immediately noting that $\pi (\phi (Y))=Y$ because $\phi$ is a section and the proof is complete. 
  \end{proof}

  Now we are able to prove Theorem $\ref{thm2}$.
  \begin{proof}[Proof of Theorem $\ref{thm2}$]
   Let $\phi$ and $\psi$ intrinsically $(L,M)$-quasi-isometric sections, with $L\geq1$ and $M \geq 0$.
  Fix $y\in Y.$  By Ahlfors-David regularity of $\phi (y),$ we know that there are $c_1,c_2, r_0>0$ such that 
     \begin{equation}\label{AhlforsNEW0}
{c_1} r^Q \leq  \phi _* \mu \big( B(\phi (y),r) \cap \phi (Y)\big)  \leq c_2 r^Q,
\end{equation}
for all $ r >r_0.$ We would like to show that there is $c_3,c_4 >0$ such that
   \begin{equation}\label{AhlforsNEW127}
 c_4 (r-M)^Q \leq \psi _* \mu \big( B(\psi(y),r) \cap \psi (Y)\big)  \leq c_4 (r+M)^Q,
\end{equation}
  for every $r > r_0.$    
  We begin noticing that, by symmetry and  \eqref{Ahlfors27ott.112}
   \begin{equation}\label{Ahlfors27ott}
C^{-1} \mu (\pi (B(\psi(y),r))) \leq \mu (\pi (B(\phi(y),r))) \leq C \mu (\pi (B(\psi(y),r))).
\end{equation}
Moreover, 
\begin{equation}\label{serveperAhlfors27}
  \psi _* \mu \big( B(\psi(y),r) \cap \psi (Y)\big) =  \mu ( \psi^{-1} \big( B(\psi(y),r) \cap \psi (Y)\big) ) = \mu ( \pi \big( B(\psi(y),r) \cap \psi (Y)\big) ), 
\end{equation}
  and, consequently, 
\begin{equation*}\label{ugualecarnot} 
\begin{aligned} \psi _* \mu \big( B(\psi(y),r) \cap \psi (Y)\big) & \geq  \mu (\pi (B(\psi (y),(r -M)/L))) \geq  C^{-1}   \mu (\pi (B(\phi (y),(r -M)/L))) \\
& \geq  C^{-1}   \mu (\pi (B(\phi (y),(r -M)/L) \cap \phi (Y)))\\
& =    C^{-1} \phi _* \mu \big( B(\phi(y),(r -M)/L ) \cap \phi (Y)\big) \\ & \geq   c_1C^{-1}  L^{-Q}  (r-M)^Q
\end{aligned}
\end{equation*}
where in the first inequality we used  the first inclusion of \eqref{inclusionepalle}  with $\psi$ in place of $\phi$, and in the second one we used \eqref{Ahlfors27ott}. In the  third  inequality we used the second inclusion of \eqref{inclusionepalle} and in the fourth one we used  \eqref{serveperAhlfors27}  with $\phi$ in place of $\psi.$ Moreover, in a similar way we have that 
\begin{equation*}
\begin{aligned} 
\psi _* \mu \big( B(\psi(y),r) \cap \psi (Y)\big) & \leq  \mu (\pi (B(\psi (y),r))) \leq C  \mu (\pi (B(\phi (y),r)))\\
& \leq C  \mu (\pi (B(\phi (y), L(r+M)) \cap \phi (Y)))
\\& = C  \phi _* \mu \big( B(\phi(y),L(r+M)) \cap \phi (Y)\big) \\
& \leq  {c_2} C  L^{Q} (r+M)^Q.
\end{aligned}
\end{equation*}
Hence, putting together the last two inequalities we have that \eqref{AhlforsNEW127} holds with ${c_3} = c_1C^{-1} L^{-Q}$ and $c_4 = {c_2} C  L^{Q}.$ 
  \end{proof}

 \bibliographystyle{alpha}
\bibliography{DDLD}

\newcommand{\etalchar}[1]{$^{#1}$}
\begin{thebibliography}{FSSC03b}

\bibitem[ABB19]{ABB}
Andrei Agrachev, Davide Barilari, and Ugo Boscain.
\newblock A comprehensive introduction to sub-{R}iemannian geometry, 2019.

\bibitem[AK00]{AmbrosioKirchheimRect}
Luigi Ambrosio and Bernd Kirchheim.
\newblock Rectifiable sets in metric and {B}anach spaces.
\newblock {\em Math. Ann.}, 318(3):527--555, 2000.

\bibitem[BJL{\etalchar{+}}99]{MR1736929}
S.~Bates, W.~B. Johnson, J.~Lindenstrauss, D.~Preiss, and G.~Schechtman.
\newblock Affine approximation of {L}ipschitz functions and nonlinear
  quotients.
\newblock {\em Geom. Funct. Anal.}, 9(6):1092--1127, 1999.

\bibitem[BLU07]{BLU}
A.~Bonfiglioli, E.~Lanconelli, and F.~Uguzzoni.
\newblock {\em Stratified {L}ie groups and potential theory for their
  sub-{L}aplacians}.
\newblock Springer Monographs in Mathematics. Springer, Berlin, 2007.

\bibitem[BW97]{BW97}
J.~Block and S.~Weinberger.
\newblock Large scale homology theories and geometry.
\newblock pages 522--569. Geometric topology (Athens, GA, 1993), AMS/IP Stud.
  Adv. Math., vol. 2, Amer. Math. Soc., Providence, RI, 1997.

\bibitem[CDPT07]{CDPT}
Luca Capogna, Donatella Danielli, Scott~D. Pauls, and Jeremy~T. Tyson.
\newblock {\em An introduction to the {H}eisenberg group and the
  sub-{R}iemannian isoperimetric problem}, volume 259 of {\em Progress in
  Mathematics}.
\newblock Birkh\"{a}user Verlag, Basel, 2007.

\bibitem[Che99]{C99}
J.~Cheeger.
\newblock Differentiability of {L}ipschitz functions on metric measure spaces.
\newblock {\em Geom. Funct. Anal. 9}, pages 428--517, 1999.

\bibitem[DD22a]{D22.2}
Daniela Di~Donato.
\newblock Ahlfors-{D}avid regularity of intrinsically quasi-symmetric sections
  in metric spaces.
\newblock {\em preprint}, 2022.

\bibitem[DD22b]{D22.1}
Daniela Di~Donato.
\newblock Intrinsically {H}\"older sections in metric spaces.
\newblock {\em preprint}, 2022.

\bibitem[DD22c]{D22.3}
Daniela Di~Donato.
\newblock Intrinsically {L}ipschitz graphs on semidirect products of groups.
\newblock {\em preprint}, 2022.

\bibitem[DDLD22]{DDLD21}
Daniela Di~Donato and Enrico Le~Donne.
\newblock Intrinsically lipschitz sections and applications to metric groups.
\newblock {\em preprint}, 2022.

\bibitem[FSSC01]{FSSC}
B.~Franchi, R.~Serapioni, and F.~Serra~Cassano.
\newblock Rectifiability and perimeter in the {H}eisenberg group.
\newblock {\em Math. Ann.}, 321(3):479--531, 2001.

\bibitem[FSSC03a]{MR2032504}
B.~Franchi, R.~Serapioni, and F.~Serra~Cassano.
\newblock Regular hypersurfaces, intrinsic perimeter and implicit function
  theorem in {C}arnot groups.
\newblock {\em Comm. Anal. Geom.}, 11(5):909--944, 2003.

\bibitem[FSSC03b]{FSSC03}
Bruno Franchi, Raul Serapioni, and Francesco Serra~Cassano.
\newblock On the structure of finite perimeter sets in step 2 {C}arnot groups.
\newblock {\em The Journal of Geometric Analysis}, 13(3):421--466, 2003.

\bibitem[Gro87]{G87}
M.~Gromov.
\newblock Hyperbolic groups, {E}ssays in group theory.
\newblock {\em Math. Sci. Res. Inst. Publ.}, pages 75--263, 1987.

\bibitem[Kei04]{K04}
S.~Keith.
\newblock A differentiable structure for metric measure spaces.
\newblock {\em Adv. Math. 183}, pages 271--315, 2004.

\bibitem[KM16]{KM16}
B.~Kleiner and J.M. Mackay.
\newblock Differentiable structures on metric measure spaces: a primer.
\newblock {\em Ann. Sc. Norm. Super. Pisa Cl. Sci. (5) Vol. XVI}, pages 41--64,
  2016.

\bibitem[LD17]{LD17}
Gilles Lancien and Aude Dalet.
\newblock Some properties of coarse {L}ipschitz maps between {B}anach spaces.
\newblock {\em North- Western European Journal of Mathematics, Laboratoires de
  Math\'ematiques du Nord-Pas-de-Calais,}, 2017.

\bibitem[NY12]{NG12}
Piotr~W. Nowak and Guoliang Yu.
\newblock Large scale geometry.
\newblock European Mathematical Society, Vol.13 of EMS textbooks in
  mathematics, 2012.

\bibitem[Roe03]{Roe2003LecturesOC}
John Roe.
\newblock Lectures on coarse geometry.
\newblock University Lecture Series, vol. 31, Amer. Math. Soc., Providence, RI,
  2003.

\bibitem[Sch95]{S95}
R.E. Schwartz.
\newblock The quasi-isometry classification of rank one lattices.
\newblock {\em Publications math\'ematiques de l'I.H.\'E.S., tome 82}, pages
  133--168, 1995.

\bibitem[VN88]{Berestovski}
Berestovskii Valerii~Nikolaevich.
\newblock Homogeneous manifolds with intrinsic metric.
\newblock {\em Sib Math J}, I(29):887--897, 1988.

\end{thebibliography}

\end{document}